\newtheorem{thm}{Theorem}[section]
\newtheorem{corollary}[thm]{Corollary}
\newtheorem{lemma}[thm]{Lemma}
\newtheorem{proposition}[thm]{Proposition}
\theoremstyle{definition}
\newtheorem{definition}[thm]{Definition}
\newtheorem{remark}[thm]{Remark}
\newtheorem{example}[thm]{Example}
\newtheorem{question}[thm]{Question}
\begin{document}
\baselineskip=15pt
\title{Uniformly $S$-essential submodules and uniformly $S$-injective uniformly $S$-envelopes}
\author[M. Adarbeh]{Mohammad Adarbeh $^{(\star)}$}
\address{Department of Mathematics, Birzeit University, Birzeit,  Palestine}
\email{madarbeh@birzeit.edu}
\author[M. Saleh]{Mohammad Saleh }
\address{Department of Mathematics, Birzeit University, Birzeit,  Palestine}
\email{msaleh@birzeit.edu}

\thanks{$^{(\star)}$ Corresponding author}
\date{}

\begin{abstract} In this paper, we introduce the notion of uniformly $S$-essential ($u$-$S$-essential) submodules. Let $R$ be a commutative ring, $S$ a multiplicative subset of $R$, and $M$ an $R$-module. A submodule $N$ of $M$ is said to be $u$-$S$-essential in $M$ if for any submodule $L$ of $M$, $N\cap L$ is $u$-$S$-torsion implies $L$ is $u$-$S$-torsion. Several properties of this notion are studied. We also introduce the notions of $u$-$S$-uniform modules and $u$-$S$-injective $u$-$S$-envelopes and characterize them in terms of $u$-$S$-essential submodules.

\end{abstract}

\subjclass[2020]{13Cxx, 13C11, 13C12, 16D40.}

\keywords{$u$-$S$-essential submodule, $u$-$S$-uniform module, $u$-$S$-injective module, $u$-$S$-injective $u$-$S$-envelope}

\maketitle

\section{Introduction}
Throughout this paper, $R$ denotes a commutative ring with nonzero identity, all $R$-modules are unitary, and $S$ denotes a multiplicative subset of $R$, that is, $1 \in S$, $0 \notin S$, and $s_1s_2\in S$ for all $s_1,s_2 \in S$. Recall that an $R$-module $M$ is called $S$-torsion if for every $m\in M$, there exists $s\in S$ such that $sm=0$ \cite{WK}. Recently, Zhang \cite{Z} introduced the notion of uniformly $S$-torsion ($u$-$S$-torsion) modules as a refinement of $S$-torsion modules. He defined an $R$-module $M$ to be $u$-$S$-torsion if there exists $s \in S$ such that $sM=0$. He also defined the notions of $u$-$S$-monomorphisms, $u$-$S$-epimorphisms, $u$-$S$-isomorphisms, and $u$-$S$-exact sequences as follows: Let $M, N$, and $L$ be $R$-modules.
    \begin{enumerate}
    \item[(i)] An $R$-homomorphism $f: M \to N$ is called a $u$-$S$-monomorphism ($u$-$S$-epimorphism) if $\text{Ker}(f)$ ($\text{Coker}(f)$) is a $u$-$S$-torsion module.  
    \item[(ii)] An $R$-homomorphism $f: M \to N$ is called a $u$-$S$-isomorphism if $f$ is both a $u$-$S$-monomorphism and a $u$-$S$-epimorphism.
 \item[(iii)] An $R$-sequence $M \xrightarrow{f} N \xrightarrow{g} L$ is said to be $u$-$S$-exact if there exists $s \in S$ such that $s\text{Ker}(g) \subseteq \text{Im}(f)$ and $s\text{Im}(f) \subseteq \text{Ker}(g)$. 
 \end{enumerate} 
After that, Chen et al. \cite{QK} introduced the notion of $u$-$S$-injective modules. They defined an $R$-module $E$ to be $u$-$S$-injective if the induced sequence
 $$0 \to \text{Hom}_{R}(C,E)\to \text{Hom}_R(B,E)\to \text{Hom}_R(A,E) \to 0$$
 is $u$-$S$-exact for any $u$-$S$-exact sequence $0 \to
A\to B\to C \to 0$. Injective modules and $u$-$S$-torsion modules are $u$-$S$-injective \cite[Corollary 4.4]{QK}. 

Essential submodules and injective envelopes play a fundamental role in module theory and homological algebra. They provide important tools for understanding module structure and for constructing minimal injective extensions. Let $M$ be an $R$-module. Recall that a submodule $N$ of $M$ is said to be essential in $M$, denoted by $N\unlhd M$, if for any submodule $L$ of $M$, $N\cap L=0$ implies $L=0$. An injective envelope of $M$ (in the sense of Eckmann-Schopf's) is a monomorphism $f:M\to E$ with $E$ injective and $\text{Im}(f)\unlhd E$. If $f:M\to E$ is an injective envelope of $M$, then $E$ is also called an injective envelope of $M$. The injective envelope of $M$ always exists, and is unique up to isomorphism \cite[Theorem 18.10]{AF}. Many properties of essential submodules and injective envelopes are given in \cite{AF}. Let $M$ be an $R$-module and $\mathcal{A}$ a class of $R$-modules. Recall from \cite[Definition 1.2.1]{Xu} that
\begin{enumerate}
\item[(i)] A linear map $f:M\to A$ with $A\in \mathcal{A}$ is called an $\mathcal{A}$-preenvelope of $M$ if the map
 $$\text{Hom}_R(f,A') : \text{Hom}_R(A,A') \to \text{Hom}_R(M,A')$$ is an epimorphism for any $A'\in \mathcal{A}$.
\item[(ii)] An $\mathcal{A}$-preenvelope $f:M\to A$ is called an $\mathcal{A}$-envelope of $M$ if for each $\alpha \in \text{End}_R(A)$, $f = \alpha f$ implies $\alpha$ is an automorphism.
\end{enumerate}

From \cite[Theorem 1.2.11]{Xu}, if $\mathcal{E}$ denotes the class of all injective $R$-modules, then the concepts of $\mathcal{E}$-envelopes and injective envelopes in the sense of Eckmann-Schopf's coincide.

This paper has two aims. The first aim is to introduce and study the notion of $u$-$S$-essential submodules, and the second 
is to introduce and study the notion of $\mathcal{A}$-$u$-$S$-(pre)envelopes, where $\mathcal{A}$ is a class of $R$-modules, and focus on the case where $\mathcal{A}$ is the class of all $u$-$S$-injective $R$-modules.

This paper is organized as follows: In Section \ref{sec:2}, we first define the notion of $u$-$S$-essential submodules, then we show that if every element of $S$ is a unit in $R$, the notions of essential submodules and $u$-$S$-essential submodules are the same (see Remark \ref{rem1}). We characterize $u$-$S$-torsion modules in terms of $u$-$S$-essential submodules (see Proposition \ref{prop1}(2)). We then introduce the notion of $u$-$S$-uniform modules, and characterize them in terms of $u$-$S$-essential submodules (see Theorem \ref{thm1}). After that, we explore several properties of $u$-$S$-essential submodules. For example, we show in Proposition \ref{prop3} that if $N$ is $u$-$\mathfrak{m}$-essential for every $\mathfrak{m}\in \text{Max}(R)$, then $N$ is essential. We show in Proposition \ref{prop4} that the converse of the last fact is true if $M$ is a prime $R$-module. The condition ''$M$ is a prime $R$-module'' in Proposition \ref{prop4} is necessary as shown in Example \ref{exp5}. We show in Theorem \ref{thm2} and its corollary that $u$-$S$-essentiality is preserved under finite direct sums. Unlike essentiality, $u$-$S$-essentiality is not necessarily preserved under infinite direct sums (see Example \ref{exp6}).

 In Section \ref{sec:3}, we introduce the notion of $\mathcal{A}$-$u$-$S$-(pre)envelopes, where $\mathcal{A}$ is a class of $R$-modules, and focus on the case where $\mathcal{A}$ is the class of all $u$-$S$-injective $R$-modules. We investigate several properties of these notions. For example, we show in Proposition \ref{prop8} that the $\mathcal{A}$-$u$-$S$-envelope, if it exists, is unique up to $u$-$S$-isomorphism. In Theorem \ref{thm4}, we characterize $u$-$S$-injective $u$-$S$-envelopes in terms of $u$-$S$-essential submodules. Following this, we prove in Theorem \ref{thm5} that a finite direct sum of $u$-$S$-injective $u$-$S$-envelopes is a $u$-$S$-injective $u$-$S$-envelope. However, an arbitrary direct sum of $u$-$S$-injective $u$-$S$-envelopes need not be a $u$-$S$-injective $u$-$S$-envelope (see Example \ref{exp7}). The last result of this section (Proposition \ref{prop12}) gives another characterization of $u$-$S$-injective $u$-$S$-envelopes. 
 
 Throughout, $U(R)$ denotes the set of all units of $R$; $\text{reg}(R)$ denotes the set of all regular elements (nonzero divisors) of $R$; $\text{Max}(R)$ denotes the set of all maximal ideals of $R$; $\text{Spec}(R)$ denotes the set of all prime ideals of $R$; $\text{Ann}_{R}(M)$ denotes the annihilator of $M$ in $R$; $E(M)$ denotes the injective envelope of $M$.

\section{ $u$-$S$-essential submodules}\label{sec:2}
 We start this section by introducing the notion of $u$-$S$-essential submodules. 
\begin{definition}\label{def1}
 Let $S$ be a multiplicative subset of a ring $R$ and $M$ an $R$-module. A submodule $N$ of $M$ is called $u$-$S$-essential in $M$, denoted by $N\unlhd^{u-S} M$, if for any submodule $L$ of $M$, $N\cap L$ is $u$-$S$-torsion implies $L$ is $u$-$S$-torsion.
 
\end{definition}

$u$-$S$-essential submodules need not be essential, and essential submodules need not be $u$-$S$-essential, as shown in the following two examples.

\begin{example}\label{exp1}
    Let $R=\mathbb{Z}_6$, $S=\{1,4\}$, and $M=\mathbb{Z}_6$. Then $N:=2\mathbb{Z}_6$ is a $u$-$S$-essential submodule of $M$. To see this, let $L\leq M$. Suppose that $N\cap L$ is $u$-$S$-torsion. The submodules of $M$ are $\{0\},2\mathbb{Z}_6,3\mathbb{Z}_6$, and $\mathbb{Z}_6$. If $L=2\mathbb{Z}_6$ or $L=\mathbb{Z}_6$, then $N\cap L=2\mathbb{Z}_6$ is not $u$-$S$-torsion, a contradiction. So $L=\{0\}$ or $L=3\mathbb{Z}_6$. If $L=\{0\}$ or $L=3\mathbb{Z}_6$, then $N\cap L=\{0\}$ is $u$-$S$-torsion. Thus $N\cap L$ is $u$-$S$-torsion if and only if $L=\{0\}$ or $L=3\mathbb{Z}_6$. But $\{0\}$ and $3\mathbb{Z}_6$ are $u$-$S$-torsion since $4\cdot 0=4\cdot 3=0$. Hence $N$ is $u$-$S$-essential in $M$. However, $N$ is not essential in $M$ since $N\cap 3\mathbb{Z}_6=2\mathbb{Z}_6\cap 3\mathbb{Z}_6=\{0\}$ but $3\mathbb{Z}_6\not=\{0\}$. 
\end{example}

\begin{example}\label{exp2}
    Let $R=\mathbb{Z}$ and $S=\{p^n:n=0,1,2,\cdots\}$, where $p$ is a prime number. Let $M=\frac{\mathbb{Z}_{(p)}}{\mathbb{Z}}$ be the $\mathbb{Z}$-module, where $\mathbb{Z}_{(p)}$ is the localization of $\mathbb{Z}$ at $S$. The submodules of $M$ are of the form $$M_n= \frac{\frac{1}{p^n}\mathbb{Z}}{\mathbb{Z}}=\Big\{\frac{a}{p^n}+\mathbb{Z}\in M\mid a\in \mathbb{Z}\Big\},$$ and they form a chain: $$0=M_0\subset M_1\subset M_2\subset \cdots \subset M.$$  Since for any nonzero submodule $K$ of $M$, $M_1\cap K=M_1\neq 0$, we have $M_1$ is an essential submodule of $M$. However, $M_1$ is not a $u$-$S$-essential submodule of $M$ since $p\in S$ and $p(M_1\cap M)=pM_1=0$ but $M$ is not $u$-$S$-torsion by \cite[Example 2.2(1)]{Z}. 
\end{example}

\begin{lemma}\label{lem1}
Let $R$ be a ring and $S\subseteq U(R)$ a multiplicative set. Then an $R$-module $M$ is $u$-$S$-torsion if and only if $M=0$.    
\end{lemma}

\begin{proof} Let $M$ be a $u$-$S$-torsion module. Then $sM=0$ for some $s\in S$. But $s$ is a unit of $R$, this implies $M=s^{-1}sM=0$. The converse is clear.
\end{proof}

The following remark shows that if $S\subseteq U(R)$, then the notions of $u$-$S$-essential and essential submodules coincide. 

\begin{remark} \label{rem1}
Let $R$ be a ring, $S\subseteq U(R)$ a multiplicative set, $M$ an $R$-module, and $N\leq M$. Then $N$ is $u$-$S$-essential in $M$ if and only if $N$ is essential in $M$. 
\end{remark}

\begin{proof} This follows from Definition \ref{def1} and Lemma \ref{lem1}.
\end{proof}

The following proposition proves that if $N$ is a $u$-$S$-torsion $u$-$S$-essential submodule of $M$, then $M$ is $u$-$S$-torsion, and that $M$ is $u$-$S$-torsion if and only if every submodule of $M$ is $u$-$S$-essential.

\begin{proposition} \label{prop1}
Let $S$ be a multiplicative subset of a ring $R$, $M$ an $R$-module, and $N\leq M$. Then the following statements hold.
\begin{enumerate}
\item[(1)] If $N$ is $u$-$S$-torsion and $u$-$S$-essential submodule of $M$, then $M$ is $u$-$S$-torsion. 
\item[(2)] $M$ is $u$-$S$-torsion if and only if every submodule of $M$ is $u$-$S$-essential.
\end{enumerate} 
\end{proposition}

\begin{proof} 
(1) Since $N\cap M=N$ is $u$-$S$-torsion and $N$ is $u$-$S$-essential in $M$, we have $M$ is $u$-$S$-torsion. \\[0.1cm]
(2) Let $M$ be $u$-$S$-torsion and $K\leq M$. So there is $s\in S$ such that $sM=0$. So for any $L\leq M$, $sL\subseteq sM=0$. Hence, for any $L\leq M$, $L$ is $u$-$S$-torsion. Thus $K$ is $u$-$S$-essential in $M$. Conversely, suppose that every submodule of $M$ is $u$-$S$-essential. Then $\{0\}$ is $u$-$S$-essential in $M$ but $\{0\}$ is $u$-$S$-torsion, so by part (1), $M$ is $u$-$S$-torsion.
\end{proof}

Recall that a nonzero $R$-module $M$ is said to be uniform if the intersection of any two nonzero submodules of $M$ is nonzero \cite{MD}. We now introduce the uniformly $S$-version of uniform modules.

\begin{definition}
  Let $S$ be a multiplicative subset of a ring $R$. An $R$-module $M$ is called $u$-$S$-uniform if $M$ is not $u$-$S$-torsion, and the intersection of any two non-$u$-$S$-torsion submodules of $M$ is non-$u$-$S$-torsion.
    \end{definition}
    
$u$-$S$-uniform modules need not be uniform, and uniform modules need not be $u$-$S$-uniform, as the following two examples show.

\begin{example}\label{exp3}
Let $R=\mathbb{Z}_6$, $S=\{1,4\}$, and $M=\mathbb{Z}_6$. The submodules of $M$ are
        $\{0\},2\mathbb{Z}_6,3\mathbb{Z}_6$, and $\mathbb{Z}_6$. Note that $2\mathbb{Z}_6$ and $\mathbb{Z}_6$ are the only non-$u$-$S$-torsion submodules of $M$. Since $2\mathbb{Z}_6\cap \mathbb{Z}_6=2\mathbb{Z}_6$ is non-$u$-$S$-torsion, $M$ is $u$-$S$-uniform. However, $M$ is not uniform since $2\mathbb{Z}_6$ and $3\mathbb{Z}_6$ are nonzero submodules of $M$ but $2\mathbb{Z}_6\cap 3\mathbb{Z}_6=\{0\}$.
\end{example}

\begin{example}\label{exp4}
Let $R=\mathbb{Z}$, $S=\mathbb{Z}\setminus \{0\}$, and $M=\mathbb{Z}_2$. Then $M$ is uniform since it is a simple $R$-module. However, $M$ is not $u$-$S$-uniform since $M$ is $u$-$S$-torsion.
\end{example}

\begin{remark}\label{rem2}
    Let $R$ be a ring and $S\subseteq U(R)$ a multiplicative set. Then $M$ is $u$-$S$-uniform if and only if $M$ is uniform.
\end{remark}

Recall that a nonzero $R$-module $M$ is uniform if and only if every nonzero submodule of $M$ is essential. The following theorem gives the uniformly $S$-version of this result.

\begin{thm}\label{thm1} 
  Let $S$ be a multiplicative subset of a ring $R$ and $M$ a non-$u$-$S$-torsion $R$-module. Then $M$ is $u$-$S$-uniform if and only if every non-$u$-$S$-torsion submodule of $M$ is $u$-$S$-essential.
\end{thm}

\begin{proof}
   Let $M$ be $u$-$S$-uniform and $N$ be a non-$u$-$S$-torsion submodule of $M$. Let $L\leq M$. Suppose that $L$ is non-$u$-$S$-torsion. Since $M$ is $u$-$S$-uniform, $N\cap L$ is non-$u$-$S$-torsion. So $N$ is $u$-$S$-essential in $M$. 
   
   Conversely, let $N$ and $L$ be non-$u$-$S$-torsion submodules of $M$. Then $N$ is $u$-$S$-essential in $M$. But $L$ is non-$u$-$S$-torsion, so $N\cap L$ is non-$u$-$S$-torsion. Therefore, $M$ is $u$-$S$-uniform.
\end{proof}

Recall that an $R$-module $M$ is said to be prime if $\text{Ann}_{R}(N)=\text{Ann}_{R}(M)$ for every nonzero submodule $N$ of $M$ \cite{YM}. The following result establishes that for any multiplicative subset $S$ of $R$, every essential submodule of a prime $R$-module is $u$-$S$-essential

\begin{proposition}\label{prop2}
Let $S$ be a multiplicative subset of a ring $R$ and $M$ a prime $R$-module. Then every essential submodule of $M$ is $u$-$S$-essential. 
\end{proposition}
\begin{proof}
   Let $M$ be a prime $R$-module and $N$ an essential submodule of $M$. Suppose that $L\leq M$ and $s(N\cap L)=0$ for some $s\in S$. If $L=0$, we are done. If $L\not=0$, then $N\cap L\not=0$ since $N$ is essential in $M$. So $\text{Ann}_{R}(N\cap L)=\text{Ann}_{R}(M)=\text{Ann}_{R}(L)$. Hence $s\in \text{Ann}_{R}(N\cap L)=\text{Ann}_{R}(L)$. Thus $sL=0$. Therefore, $N$ is $u$-$S$-essential in $M$.
\end{proof}

\begin{definition}
Let $R$ be a ring, $\mathfrak{p}$ a prime ideal of $R$, $M$ an $R$-module, and $N\leq M$. We say that $N$ is $u$-$\mathfrak{p}$-essential in $M$ if $N$ is $u$-$(R\setminus \mathfrak{p})$-essential in $M$.
\end{definition}

\begin{proposition}\label{prop3}
Let $R$ be a ring, $M$ an $R$-module, and $N\leq M$. If $N$ is $u$-$\mathfrak{m}$-essential for any $\mathfrak{m}\in \text{Max}(R)$, then $N$ is essential. 
\end{proposition}

\begin{proof}
Let $L\leq M$. Suppose that $N\cap L=0$. Since $N$ is $u$-$\mathfrak{m}$-essential for any $\mathfrak{m}\in \text{Max}(R)$, $L$ is $u$-$(R\setminus \mathfrak{m})$-torsion for any $\mathfrak{m}\in \text{Max}(R)$. By \cite[Lemma 2.18]{MM}, we have $L=0$. Therefore, $N$ is essential.  
\end{proof}

The following proposition provides a local characterization of essential submodules of prime modules. 

\begin{proposition}\label{prop4}
      Let $R$ be a ring, $M$ a prime $R$-module, and $N\leq M$. Then the following statements are equivalent:
      \begin{enumerate}
          \item[(1)] $N$ is essential;
          \item[(2)] $N$ is $u$-$\mathfrak{p}$-essential for any $\mathfrak{p}\in \text{Spec}(R)$;
          \item[(3)] $N$ is $u$-$\mathfrak{m}$-essential for any $\mathfrak{m}\in \text{Max}(R)$.
      \end{enumerate}
\end{proposition}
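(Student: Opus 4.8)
The plan is to prove the proposition by the cyclic chain of implications $(1)\Rightarrow(2)\Rightarrow(3)\Rightarrow(1)$, assembling the two results already established above. Since $M$ is assumed to be prime throughout, both Lemma \ref{lemp} and Proposition \ref{p1} are available to me, and in fact each arrow in the cycle reduces to one of these (or to a trivial set-theoretic inclusion).

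For $(1)\Rightarrow(2)$, I would fix an arbitrary prime ideal $\mathfrak{p}\in\text{Spec}(R)$ and observe that $S_{\mathfrak{p}}=R\setminus\mathfrak{p}$ is a multiplicative subset of $R$. Because $M$ is prime and $K$ is assumed essential, Lemma \ref{lemp} applies verbatim with $S$ replaced by $S_{\mathfrak{p}}$, yielding that $K$ is $u$-$S_{\mathfrak{p}}$-essential, which by definition means $K$ is $u$-$\mathfrak{p}$-essential. As $\mathfrak{p}$ was arbitrary, statement $(2)$ follows. The point to verify here is simply that Lemma \ref{lemp} is stated for an \emph{arbitrary} multiplicative subset $S$, so no new hypothesis is smuggled in when specializing to $S_{\mathfrak{p}}$.

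The implication $(2)\Rightarrow(3)$ is immediate: every maximal ideal is prime, so $\text{Max}(R)\subseteq\text{Spec}(R)$, and hence the hypothesis in $(2)$ is formally stronger than that in $(3)$. Finally, $(3)\Rightarrow(1)$ is precisely the content of Proposition \ref{p1}, which does not even require $M$ to be prime; this closes the cycle.

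In short, I do not anticipate any genuine obstacle: the real mathematical work was already carried out in Lemma \ref{lemp} (the essential $\Rightarrow$ $u$-$S$-essential direction for prime modules) and Proposition \ref{p1} (the recovery of essentiality from $u$-$\mathfrak{m}$-essentiality at all maximal ideals). The proposition is therefore best understood as a packaging of these two facts, with the primeness of $M$ being exactly what makes the forward direction $(1)\Rightarrow(2)$ go through.
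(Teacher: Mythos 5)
Your proof is correct and follows exactly the paper's own argument: the cycle $(1)\Rightarrow(2)$ via Lemma \ref{lemp}, $(2)\Rightarrow(3)$ by the inclusion $\text{Max}(R)\subseteq\text{Spec}(R)$, and $(3)\Rightarrow(1)$ via Proposition \ref{p1}. Your additional remarks (that Lemma \ref{lemp} holds for an arbitrary multiplicative subset and that Proposition \ref{p1} does not need primeness) are accurate and merely make explicit what the paper leaves implicit.
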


\begin{proof}
    $(1)\Rightarrow (2)$: This follows from Proposition \ref{prop2}.\\
$(2)\Rightarrow (3)$: Clear.\\
$(3)\Rightarrow (1)$: This follows directly from Proposition \ref{prop3}.
\end{proof}

The condition "$M$ is a prime $R$-module" in Proposition \ref{prop4} is necessary, as the following example shows.

\begin{example}\label{exp5}
Let $R=\mathbb{Z}$ and $\mathfrak{m}=3\mathbb{Z}$. Let $M=\frac{\mathbb{Z}_{(2)}}{\mathbb{Z}}$ be the $\mathbb{Z}$-module given in Example \ref{exp2} with $p=2$, and let $N=\frac{\frac{1}{2}\mathbb{Z}}{\mathbb{Z}}$. Then, as in Example \ref{exp2}, $N$ is an essential submodule of $M$. Since $2\in \text{Ann}_R(N)$ and $2\notin \text{Ann}_R(M)$, $M$ is not a prime $R$-module. Now, if $sM=0$ for some $s\in R\setminus\mathfrak{m}$, then $s\neq 0$ and $s(\frac{1}{2^n}+\mathbb{Z})=0+\mathbb{Z}$ for each $n> 0$. So $\frac{s}{2^n}\in \mathbb{Z}$ for each $n> 0$. This means that $s$ is divisible by $2^n$ for each $n> 0$. So $|s|$ is divisible by $2^n$ for each $n> 0$. Choose a natural number $m$ so that $|s|<2^m$. Then $|s|$ is not divisible by $2^m$, a contradiction. Hence $sM\neq 0$ for all $s\in R\setminus\mathfrak{m}$. That is, $M$ is not $u$-$(R\setminus\mathfrak{m})$-torsion. By the last fact and since $2\in R\setminus \mathfrak{m}$ with $2(N\cap M)=2N=0$, we have $N$ is not $u$-$\mathfrak{m}$-essential in $M$.
\end{example}

The following lemma is used frequently in this paper.

\begin{lemma}\label{lem2}
Let $S$ be a multiplicative subset of a ring $R$ and $M$ an $R$-module. Let $K,N\leq M$ be such that $K\subseteq N$. If $N$ is $u$-$S$-torsion, then $K$ is $u$-$S$-torsion.    
\end{lemma}

\begin{proof}
    Suppose that $N$ is $u$-$S$-torsion. Then $sN=0$ for some $s\in S$. So $sK\subseteq sN=0$. Hence $sK=0$. Thus $K$ is $u$-$S$-torsion.
\end{proof}

\begin{proposition}\label{prop5}
Let $S$ be a multiplicative subset of a ring $R$ and $M$ an $R$-module. Let $K\leq N\leq M$ and $H\leq M$. Then the following statements hold.
\begin{enumerate}
\item[(1)] $K\unlhd^{u-S} M$ if and only if $K\unlhd^{u-S} N$ and $N\unlhd^{u-S} M$. 
\item[(2)] $H\cap K\unlhd^{u-S} M$ if and only if $H\unlhd^{u-S} M$ and $K\unlhd^{u-S} M$. 
\end{enumerate}
\end{proposition}
\begin{proof} (1) $(\Rightarrow)$. First, we show that $K\unlhd^{u-S} N$. Let $L\leq N$. Suppose that $K\cap L$ is $u$-$S$-torsion. Since $L\leq M$ and $K\unlhd^{u-S} M$, we have $L$ is $u$-$S$-torsion. So $K\unlhd^{u-S} N$. Next, we show that $N\unlhd^{u-S} M$. Let $L\leq M$. Suppose that $N\cap L$ is $u$-$S$-torsion. Since $K\cap L\subseteq N\cap L$, $K\cap L$ is $u$-$S$-torsion by Lemma \ref{lem2}. But $K\unlhd^{u-S} M$, so $L$ is $u$-$S$-torsion. Thus $N\unlhd^{u-S} M$. \\
$(\Leftarrow)$. Let $L\leq M$. Suppose that $K\cap L$ is $u$-$S$-torsion. Then $K\cap N\cap L=K\cap L$ is $u$-$S$-torsion. Since $N\cap L\leq N$ and $K\unlhd^{u-S} N$, we have $N\cap L$ is $u$-$S$-torsion. But $N\unlhd^{u-S} M$, so $L$ is $u$-$S$-torsion. Hence $K\unlhd^{u-S} M$.\\[0.1cm]
(2) $(\Rightarrow)$. Since $H\cap K\leq H\leq M$, $H\cap K\leq K\leq M$, and $H\cap K\unlhd^{u-S} M$, then by part (1), $H\unlhd^{u-S} M$ and $K\unlhd^{u-S} M$. \\
 $(\Leftarrow)$. Let $L\leq M$. Suppose that $H\cap K\cap L$ is $u$-$S$-torsion. Since $H\unlhd^{u-S}M$, $K\cap L$ is $u$-$S$-torsion. But $K\unlhd^{u-S}M$, hence $L$ is $u$-$S$-torsion. Therefore, $H\cap K\unlhd^{u-S} M$.         
\end{proof}

\begin{lemma}\label{lem3}
Let $S$ be a multiplicative subset of a ring $R$, $f:M\to N$ an $R$-homomorphism, and $H\leq M$. Then the following statements hold.
 \begin{enumerate}
 \item[(1)] If $H$ is $u$-$S$-torsion, then $f(H)$ is $u$-$S$-torsion.
\item[(2)] If $f$ is a $u$-$S$-monomorphism, then the converse of (1) holds. 
    \end{enumerate}
\end{lemma}

\begin{proof}
 (1) Suppose that $H$ is $u$-$S$-torsion. Then $sH=0$ for some $s\in S$. So $sf(H)=f(sH)=0$. Hence $f(H)$ is $u$-$S$-torsion.\\[0.1cm]
 (2) Suppose that $f(H)$ is $u$-$S$-torsion. Then $sf(H)=0$ for some $s\in S$. So $sH\subseteq \text{Ker}(f)$. But $\text{Ker}(f)$ is $u$-$S$-torsion since  $f$ is a $u$-$S$-monomorphism. Hence $sH$ is $u$-$S$-torsion by Lemma \ref{lem2}. Thus $s'sH=0$ for some $s'\in S$. Therefore, $H$ is $u$-$S$-torsion. 
\end{proof}

\begin{proposition}\label{prop6}
    Let $S$ be a multiplicative subset of a ring $R$ and $f:M\to N$ an $R$-homomorphism. Then the following statements hold. 
    \begin{enumerate}
 \item[(1)] If $K\unlhd^{u-S} N$, then $f^{-1}(K)\unlhd^{u-S} M$.
\item[(2)] If $H\unlhd^{u-S} M$ and $f$ is a $u$-$S$-monomorphism, then $f(H)\unlhd^{u-S} f(M)$.
    \end{enumerate}
   
\end{proposition}
\begin{proof} (1) Let $L\leq M$. Suppose that $L\cap f^{-1}(K)$ is $u$-$S$-torsion. It is easy to chick that $f\big(L\cap f^{-1}(K)\big)=f(L)\cap K$. By Lemma \ref{lem3}(1), we have $f(L)\cap K$ is $u$-$S$-torsion. Since $K\unlhd^{u-S} N$, $f(L)$ is $u$-$S$-torsion. So $sf(L)=0$ for some $s\in S$. This implies that $sL\subseteq f^{-1}(0)\subseteq f^{-1}(K)$. Hence $sL=sL\cap f^{-1}(K)\subseteq L\cap f^{-1}(K)$. But $L\cap f^{-1}(K)$ is $u$-$S$-torsion, so $sL$ is $u$-$S$-torsion by Lemma \ref{lem2}. Thus $L$ is $u$-$S$-torsion. Therefore, $f^{-1}(K)\unlhd^{u-S} M$.\\[0.1cm]
(2) Let $L\leq f(M)$. Assume that $f(H)\cap L$ is $u$-$S$-torsion. Since $f$ is a $u$-$S$-monomorphism and $f\big(H\cap f^{-1}(L)\big)=f(H)\cap L$ is $u$-$S$-torsion, then $H\cap f^{-1}(L)$ is $u$-$S$-torsion by Lemma \ref{lem3}(2). But $H\unlhd^{u-S} M$, so $f^{-1}(L)$ is $u$-$S$-torsion. Since $L\leq f(M)$, $L=f(f^{-1}(L))$. Hence $L$ is $u$-$S$-torsion by Lemma \ref{lem3}(1). Thus $f(H)\unlhd^{u-S} f(M)$.
\end{proof}

The following theorem and its corollary show that $u$-$S$-essentiality is preserved under finite direct sums.

\begin{thm} \label{thm2}
 Let $S$ be a multiplicative subset of a ring $R$. Let $M_1$ and $M_2$ be $R$-modules and let $K_{1}\leq M_{1}$ and $K_{2}\leq M_{2}$. Then
  $K_{1}\oplus K_{2}\unlhd^{u-S} M_{1}\oplus M_{2}$ if and only if $K_{1}\unlhd^{u-S} M_{1}$ and $K_{2}\unlhd^{u-S} M_{2}$.

\end{thm}

\begin{proof}
$(\Rightarrow)$. Suppose that $K_{1}\oplus K_{2}\unlhd^{u-S} M_{1}\oplus M_{2}$. If $K_{1}$ is not $u$-$S$-essential in $M_{1}$, then there is $L_{1}\leq M_{1}$ such that $K_{1}\cap L_{1}$ is $u$-$S$-torsion but $L_{1}$ is not $u$-$S$-torsion. Now $$(K_{1}\oplus K_{2})\cap (L_{1}\oplus 0)=(K_{1}\cap L_{1})\oplus (K_{2}\cap 0)=(K_{1}\cap L_{1})\oplus 0.$$ So $(K_{1}\oplus K_{2})\cap (L_{1}\oplus 0)$ is $u$-$S$-torsion. But $K_{1}\oplus K_{2}\unlhd^{u-S} M_{1}\oplus M_{2}$, so $L_{1}\oplus 0$ is $u$-$S$-torsion, which implies $L_{1}$ is $u$-$S$-torsion, a contradiction. Thus $K_{1}\unlhd^{u-S} M_{1}$. Similarly, we can show that $K_{2}\unlhd^{u-S} M_{2}$.\\
  $(\Leftarrow)$. Let $\pi_{1}:M_{1}\oplus M_{2}\to M_{1}$ and $\pi_{2}:M_{1}\oplus M_{2}\to M_{2}$ be the natural projections. Since $K_{1}\unlhd^{u-S} M_{1}$ and $K_{2}\unlhd^{u-S} M_{2}$, then by Proposition \ref{prop6}(1), we have $\pi_{1}^{-1}(K_{1})\unlhd^{u-S} M_{1}\oplus M_{2}$ and $\pi_{2}^{-1}(K_{2})\unlhd^{u-S} M_{1}\oplus M_{2}$. But $\pi_{1}^{-1}(K_{1})=K_1\oplus M_2$ and $\pi_{2}^{-1}(K_{2})=M_1\oplus K_2$. So $K_1\oplus M_2\unlhd^{u-S} M_{1}\oplus M_{2}$ and $M_1\oplus K_2\unlhd^{u-S} M_{1}\oplus M_{2}$. By Proposition \ref{prop5}(2), we have $$(K_1\oplus M_2)\cap (M_1\oplus K_2)\unlhd^{u-S} M_{1}\oplus M_{2}.$$ But $K_{1}\oplus K_{2}=(K_1\oplus M_2)\cap (M_1\oplus K_2)$. Thus $K_{1}\oplus K_{2}\unlhd^{u-S} M_{1}\oplus M_{2}$.
\end{proof}

\begin{corollary}\label{cor1}
    Let $S$ be a multiplicative subset of a ring $R$. For each $i=1,2,\cdots, n$, let $M_i$ be an $R$-module and $K_i\leq M_i$. Then $\bigoplus\limits_{i=1}^{n}K_i \unlhd^{u-S} \bigoplus\limits_{i=1}^{n}M_i$ if and only if $K_{i}\unlhd^{u-S} M_{i}$ for each $i=1,2,\cdots, n$.
\end{corollary}

Corollary \ref{cor1} may fail for infinite direct sums, as illustrated by the following example.

\begin{example}\label{exp6} 
 Let $R= \mathbb{Z}$, $p$ a prime in $\mathbb{Z}$, and $S = \{p^n \mid n=0,1,2,\cdots\}$. For each $n \geq 1$, let $M_n = \frac{\mathbb{Z}}{p^n\mathbb{Z}}$ and $K_n=\{0+p^n\mathbb{Z}\}$. Then each $M_n$ is $u$-$S$-torsion. So by Proposition \ref{prop1}(2), $K_n$ is $u$-$S$-essential in $M_n$ for each $n\geq 1$. But $\bigoplus\limits_{n=1}^{\infty}K_n=0$ is not $u$-$S$-essential in $\bigoplus\limits_{n=1}^{\infty}M_n$ since $0\cap \bigoplus\limits_{n=1}^{\infty}M_n=0$ is $u$-$S$-torsion but $\bigoplus\limits_{n=1}^{\infty}M_n$ is not $u$-$S$-torsion. To see this, let $M=\bigoplus\limits_{n=1}^{\infty}M_n$ and suppose there is $p^m\in S$ such that $p^mM=0$. Let $x$ be the element in $M$ such that the $(m+1)$-\text{th} component is $1+p^{m+1}\mathbb{Z}$ and all other components are $0$. Then $p^mx=0$ but $p^m(1+p^{m+1}\mathbb{Z})=p^m+p^{m+1}\mathbb{Z}\neq 0+p^{m+1}\mathbb{Z}$, so $p^mx\neq 0$, a contradiction. Hence $M$ is not $u$-$S$-torsion.
\end{example}

Let $S$ be a multiplicative subset of a ring $R$ and $M$ an $R$-module. The set $$\text{tor}_{S}(M)=\{m\in M\mid sm=0 ~\text{for some}~ s\in S\}$$ is a submodule of $M$, called the $S$-torsion submodule of $M$ \cite{WK}. The following theorem gives a necessary and sufficient condition for a submodule $N$ of $M$ to be $u$-$S$-essential under the condition that $\text{tor}_{S}(M)$ is $u$-$S$-torsion.

\begin{thm} \label{thm3}
Let $S$ be a multiplicative subset of a ring $R$, $M$ an $R$-module, and $N\leq M$. Suppose that $\text{tor}_{S}(M)$ is $u$-$S$-torsion. Then $N\unlhd^{u-S}M$ if and only if for each $x\in M\setminus \text{tor}_{S}(M)$, $N\cap Rx$ is not $u$-$S$-torsion.
\end{thm}
\begin{proof}
   $(\Rightarrow)$. Let $x\in M\setminus \text{tor}_{S}(M)$. If $N\cap Rx$ is $u$-$S$-torsion, then $Rx$ is $u$-$S$-torsion since $N\unlhd^{u-S}M$. So there is $s\in S$ such that $sRx=0$ but then $sx=0$ and so $x\in \text{tor}_{S}(M)$, a contradiction. Hence $N\cap Rx$ is not $u$-$S$-torsion.\\
   $(\Leftarrow)$. Let $L\leq M$. Suppose that $L$ is not $u$-$S$-torsion. Then $L\nsubseteq \text{tor}_{S}(M)$ by Lemma \ref{lem2} and since $\text{tor}_{S}(M)$ is $u$-$S$-torsion. Take $x\in L\setminus \text{tor}_{S}(M)$. By hypothesis, $N\cap Rx$ is not $u$-$S$-torsion. But $N\cap Rx\subseteq N\cap L$, so $N\cap L$ is not $u$-$S$-torsion by Lemma \ref{lem2}. Therefore, $N\unlhd^{u-S}M$.
\end{proof}

The following result is a special case of Theorem \ref{thm3}. 

\begin{corollary}\cite[Lemma 5.19]{AF}.
Let $R$ be a ring, $M$ an $R$-module, and $N\leq M$. Then $N\unlhd M$ if and only if for each $0\not= x\in M$, there is $r\in R$ such that $0\not=rx\in N$.
\end{corollary}
\begin{proof}
Take $S=\{1\}$. Then $\text{tor}_{S}(M)=\{0\}$ is $u$-$S$-torsion and the result follows from Lemma \ref{lem1}, Remark \ref{rem1}, and Theorem \ref{thm3}.
\end{proof}

Let $R$ be a commutative ring and $M$ an $R$-module. Recall that the idealization of $M$ is the commutative ring
$R\ltimes M=R\times M$ with component-wise addition and multiplication defined by $(a,x)(b,y)=(ab,ay+bx)$ \cite{AW}.
The canonical embedding $i_{R}:R\hookrightarrow R\ltimes M$ defined by $r\mapsto (r,0)$, $r\in R$, induces an $R$-module structure on $R\ltimes M$ via the action $$r\cdot (a,x)=(r,0)(a,x)=(ra,rx),~~~r\in R,~ (a,x)\in R\ltimes M.$$ 
 
The following example shows that the condition ''$\text{tor}_{S}(M)$ is $u$-$S$-torsion'' in Theorem \ref{thm3} is necessary.

\begin{example}
 Let $R=\mathbb{Z}$, $S=\{1,2,3,\cdots\}$, and $M=\mathbb{Z}\ltimes\frac{\mathbb{Q}}{\mathbb{Z}}$. Then $\text{tor}_{S}(M)=0\ltimes \frac{\mathbb{Q}}{\mathbb{Z}}$ is not $u$-$S$-torsion. Let $N=R\big(1,\frac{1}{2}+\mathbb{Z}\big)$. Then $N$ is not $u$-$S$-essential in $M$ since $N\cap \big(0\ltimes \frac{\mathbb{Q}}{\mathbb{Z}}\big)=0$ but $0\ltimes \frac{\mathbb{Q}}{\mathbb{Z}}$ is not $u$-$S$-torsion. However, if $x=\big(k,\frac{m}{n}+\mathbb{Z}\big)\in M\setminus \text{tor}_{S}(M)$, then $k\not=0$, and if we take $r=2n\in R$, then $$rx=(2nk,2m+\mathbb{Z})=(2nk,0+\mathbb{Z})=\bigg(2nk,\frac{2nk}{2}+\mathbb{Z}\bigg)=2nk\bigg(1,\frac{1}{2}+\mathbb{Z}\bigg)\in N\cap Rx,$$ and for each $s\in S$, we have $$(0,0+\mathbb{Z})\neq (2snk,2sm+\mathbb{Z})=srx\in s(N\cap Rx).$$ 
 Thus $N\cap Rx$ is not $u$-$S$-torsion.
\end{example}

Next, we define the notion of $u$-$S$-essential $u$-$S$-monomorphisms. 

\begin{definition}
    Let $S$ be a multiplicative subset of a ring $R$. A $u$-$S$-monomorphism $f: M \to N$ is said to be $u$-$S$-essential if $\text{Im}(f)\unlhd^{u-S} N$.
\end{definition}

The final result of this section characterizes $u$-$S$-essential submodules.

\begin{proposition}\label{prop7}
   Let $S$ be a multiplicative subset of a ring $R$, $M$ an $R$-module, and $K\leq M$. Then the following statements are equivalent:
     \begin{enumerate}
    \item[(1)] $K\unlhd^{u-S} M$;
    \item[(2)] The inclusion map $i_{K}: K \to M$ is a $u$-$S$-essential monomorphism;
\item[(3)] For any $R$-module $N$ and for any $R$-homomorphism $h:M\to N$, if $hi_{K}$ is a $u$-$S$-monomorphism, then $h$ is a $u$-$S$-monomorphism.
\end{enumerate}
\end{proposition}

\begin{proof}
    $(1)\Leftrightarrow (2)$: Clear.\\ 
    $(1)\Rightarrow (3)$: Let $K\unlhd^{u-S} M$ and $h:M\to N$ be an $R$-homomorphism. Suppose that $hi_{K}$ is a $u$-$S$-monomorphism. Then $\text{Ker} (hi_{K})$ is $u$-$S$-torsion. But $\text{Ker} (hi_{K})=K\cap \text{Ker}(h)$, so $K\cap \text{Ker}(h)$ is $u$-$S$-torsion. Since $K\unlhd^{u-S} M$, $\text{Ker} (h)$ is $u$-$S$-torsion. Thus $h$ is a $u$-$S$-monomorphism.\\
    $(3)\Rightarrow (1)$: Let $L\leq M$. Suppose that $K\cap L$ is $u$-$S$-torsion. Since $L=\text{Ker} (\eta_{L})$, where $\eta_{L}: M \to \frac{M}{L}$ is the natural map and $\text{Ker} (\eta_{L}i_{K})=K\cap \text{Ker}(\eta_{L})=K\cap L$, we have $\text{Ker}(\eta_{L}i_{K})$ is $u$-$S$-torsion. That is, $\eta_{L}i_{K}$ is a $u$-$S$-monomorphism. So by (3), we have $\eta_{L}$ is a $u$-$S$-monomorphism. Hence $L=\text{Ker} (\eta_{L})$ is $u$-$S$-torsion. Therefore, $K\unlhd^{u-S} M$.
\end{proof}

\section{Uniformly $S$-Injective Uniformly $S$-Envelopes}\label{sec:3}

In this section, we introduce the notion of $\mathcal{A}$-$u$-$S$-(pre)envelopes, where $\mathcal{A}$ is a class of $R$-modules, and focus on the case where $\mathcal{A}$ is the class of all $u$-$S$-injective $R$-modules. We start this section by recalling the following definition:

\begin{definition}\label{def2}\cite[Definition 1.2.1]{Xu}. Let $R$ be a ring, $M$ an $R$-module, and $\mathcal{A}$ a class of $R$-modules. 
 \begin{enumerate}
\item[(i)] A linear map $f:M\to A$ with $A\in \mathcal{A}$ is called an $\mathcal{A}$-preenvelope of $M$ if the map
 $$\text{Hom}_R(f,A') : \text{Hom}_R(A,A') \to \text{Hom}_R(M,A')$$ is an epimorphism for any $A'\in \mathcal{A}$.
\item[(ii)] An $\mathcal{A}$-preenvelope $f:M\to A$ is called an $\mathcal{A}$-envelope of $M$ if for each $\alpha \in \text{End}_R(A)$, $f = \alpha f$ implies $\alpha$ is an automorphism.
 \end{enumerate}
 \end{definition}
Now, we give the uniformly $S$-version of $\mathcal{A}$-(pre)envelopes, where $\mathcal{A}$ is a class of $R$-modules.
 
 \begin{definition}\label{def3}
 Let $S$ be a multiplicative subset of a ring $R$, $M$ an $R$-module, and $\mathcal{A}$ a class of $R$-modules.
 \begin{enumerate}
     \item[(1)] A linear map $f:M\to A$ with $A\in \mathcal{A}$ is called an $\mathcal{A}$-$u$-$S$-preenvelope of $M$ if the map
 $$\text{Hom}_R(f,A') : \text{Hom}_R(A,A') \to \text{Hom}_R(M,A')$$ is a $u$-$S$-epimorphism for any $A'\in \mathcal{A}$.
     \item[(2)] An $\mathcal{A}$-$u$-$S$-preenvelope $f:M\to A$ is called an $\mathcal{A}$-$u$-$S$-envelope of $M$ if for each $\alpha \in \text{End}_R(A)$, $sf=\alpha f$ for some $s\in S$ implies $\alpha$ is a $u$-$S$-isomorphism. 
     \item[(3)] If $A\in \mathcal{A}$ and $f:M\to A$ is an $\mathcal{A}$-$u$-$S$-(pre)envelope of $M$, then we also say $A$ is an $\mathcal{A}$-$u$-$S$-(pre)envelope of $M$.
     \item[(4)] If $u$-$S$-$\mathcal{I}$ is the class of all $u$-$S$-injective $R$-modules, then a $u$-$S$-$\mathcal{I}$-$u$-$S$-(pre)envelope $f:M\to A$ is called a $u$-$S$-injective $u$-$S$-(pre)envelope.  
 \end{enumerate}
 \end{definition}

\begin{remark}\label{rem3}
   Let $R$ be a ring, $M$ an $R$-module, $\mathcal{A}$ a class of $R$-modules, and $A\in \mathcal{A}$. If $S=\{1\}$, then a linear map $f: M\to A$ is an $\mathcal{A}$-$u$-$S$-(pre)envelope if and only if $f$ is an $\mathcal{A}$-(pre)envelope.
\end{remark}

The following proposition shows that the $\mathcal{A}$-$u$-$S$-envelope of $M$, if it exists, is unique up to $u$-$S$-isomorphism.

\begin{proposition}\label{prop8}
 Let $S$ be a multiplicative subset of a ring $R$ and $M$ an $R$-module. If $f_1:M\to A_1$ and $f_2:M\to A_2$ are $\mathcal{A}$-$u$-$S$-envelopes of $M$, then $A_1$ is $u$-$S$-isomorphic to $A_2$.
\end{proposition}

\begin{proof}
 Since $f_1:M\to A_1$ and $f_2:M\to A_2$ are $\mathcal{A}$-$u$-$S$-preenvelopes of $M$, the maps
 $$f_1^* : \text{Hom}_R(A_1,A_2) \to \text{Hom}_R(M,A_2)~\text{and }~f_2^* : \text{Hom}_R(A_2,A_1) \to \text{Hom}_R(M,A_1)$$ are $u$-$S$-epimorphisms. So $$s_1\text{Hom}_R(M,A_2)\subseteq \text{Im}(f_1^*) ~\text{and }~ s_2\text{Hom}_R(M,A_1)\subseteq \text{Im}(f_2^*)$$ for some $s_1,s_2\in S$. Hence $s_1f_2=g_1f_1$ and $s_2f_1=g_2f_2$ for some $R$-homomorphisms $g_1:A_1\to A_2$ and $g_2:A_2\to A_1$. Let $s=s_1s_2$. Then $$sf_1=s_1s_2f_1=s_1g_2f_2=g_2s_1f_2=g_2g_1f_1.$$ Similarly, we have $sf_2=g_1g_2f_2$. Since $f_1:M\to A_1$ and $f_2:M\to A_2$ are $\mathcal{A}$-$u$-$S$-envelopes of $M$, then $g_2g_1:A_1\to A_1$ and $g_1g_2:A_2\to A_2$ are $u$-$S$-isomorphisms. We claim that $g_1:A_1\to A_2$ is a $u$-$S$-isomorphism. Since $\text{Ker}(g_1)\subseteq \text{Ker}(g_2g_1)$ and $\text{Ker}(g_2g_1)$ is $u$-$S$-torsion, then by Lemma \ref{lem2}, $\text{Ker}(g_1)$ is $u$-$S$-torsion. That is, $g_1$ is a $u$-$S$-monomorphism. Next, since $g_1g_2$ is a $u$-$S$-epimorphism, there is $t\in S$ such that $$tA_2\subseteq \text{Im}(g_1g_2)=g_1(\text{Im}(g_2))\subseteq \text{Im}(g_1).$$ Hence $g_1$ is a $u$-$S$-epimorphism. Thus $g_1$ is a $u$-$S$-isomorphism. Therefore, $A_1$ is $u$-$S$-isomorphic to $A_2$.
\end{proof}

Recall that a short $u$-$S$-exact sequence $0\to M \xrightarrow{f} N \xrightarrow{g} L\to 0$ is said to be $u$-$S$-split (with respect to $s$) if there exist $s\in S$ and an $R$-homomorphism $f':N \to M$ such that $f'f=s1_{M}$, where $1_{M}:M\to M$ is the identity map on $M$ \cite{ZQ}. Equivalently, if there exist $s\in S$ and an $R$-homomorphism $g':L\to N$ such that $gg'=s1_{L}$ \cite[Lemma 2.4]{ZQ}. The following proposition proves that the $\mathcal{A}$-$u$-$S$-envelope of $M$, if it exists, is a $u$-$S$-direct summand of any $\mathcal{A}$-$u$-$S$-preenvelope of $M$.

\begin{proposition}\label{prop9} 
     Let $S$ be a multiplicative subset of a ring $R$ and $M$ an $R$-module. If $f:M\to A$ is an $\mathcal{A}$-$u$-$S$-envelope of $M$ and $g:M\to A'$ is an $\mathcal{A}$-$u$-$S$-preenvelope of $M$, then $A$ is a $u$-$S$-direct summand of $A'$.
     \end{proposition}
\begin{proof}
   Let $f:M\to A$ be an $\mathcal{A}$-$u$-$S$-envelope of $M$ and $g:M\to A'$ be an $\mathcal{A}$-$u$-$S$-preenvelope of $M$. Then the maps
        \begin{center}
        $f^* : \text{Hom}_R(A,A') \to \text{Hom}_R(M,A')$ ~ and ~ $g^* : \text{Hom}_R(A',A) \to \text{Hom}_R(M,A)$
        \end{center} are $u$-$S$-epimorphisms. So there exist $s_1,s_2\in S$ such that $s_1g=h_1f$ and $s_2f=h_2g$ for some $R$-homomorphisms $h_1:A\to A'$ and $h_2:A'\to A$. We have the following diagram:  \[\xymatrix{
                              &A \ar[d]^{h_1}\\
 M \ar@{>}[ru]^-{ f}\ar[r]^{g}\ar@{>}[dr]_{ f}&A' \ar[d]^{h_2}\\
           &A
}\] Let $s=s_1s_2$. Then $$sf=s_1s_2f=s_1h_2g=h_2s_1g=h_2h_1f$$ Since $f:M\to A$ is an $\mathcal{A}$-$u$-$S$-envelope of $M$, $h:=h_2h_1$ is a $u$-$S$-isomorphism. By \cite[Lemma 2.1]{ZQ}, there is a $u$-$S$-isomorphism $h':A\to A$ such that $hh'=h'h=t1_{A}$ for some $t\in S$. Since $(h'h_2)h_1=t1_A$, $tA\subseteq \text{Im}((h'h_2)h_1)\subseteq \text{Im}(h'h_2)$. So $h'h_2:A'\to A$ is a $u$-$S$-epimorphism. Let $B=\text{Ker}(h'h_2)$, then the $u$-$S$-exact sequence $0\to B\to A'\xrightarrow{h'h_2}A\to 0$ is $u$-$S$-split. Thus by \cite[Lemma 2.8]{KMOZ}, $A'$ is $u$-$S$-isomorphic to $B\oplus A$. This means that $A$ is a $u$-$S$-direct summand of $A'$.
\end{proof}

Let $S$ be a multiplicative subset of a ring $R$. Recall that an $R$-module $E$ is $u$-$S$-injective if and only if for any $u$-$S$-monomorphism $f:A\to B$, there exists $s\in S$ such that for any
 $R$-homomorphism $h :A\to E$, there exists an $R$-homomorphism $g:B\to E$
 such that $sh=gf$ \cite[Proposition 2.5]{ZQ}. 
 
The following result characterizes $u$-$S$-injective $u$-$S$-preenvelopes and shows that every $R$-module has a $u$-$S$-injective $u$-$S$-preenvelope.
 
\begin{proposition}\label{prop10}
     Let $S$ be a multiplicative subset of a ring $R$ and $M$ an $R$-module. Then the following statements hold.
  \begin{enumerate}
     \item[(1)] An $R$-homomorphism $f:M\to E$ is a $u$-$S$-injective $u$-$S$-preenvelope of $M$ if and
 only if $f$ is a $u$-$S$-monomorphism and $E$ is $u$-$S$-injective.
     \item[(2)] Every $R$-module has a $u$-$S$-injective $u$-$S$-preenvelope.
 \end{enumerate}
\end{proposition}

\begin{proof} (1) Suppose that $f:M\to E$ is a $u$-$S$-injective $u$-$S$-preenvelope. Then $E$ is $u$-$S$-injective. Let $g:M\to E'$ be a monomorphism with $E'$ injective. Since the map $f^* : \text{Hom}_R(E,E') \to \text{Hom}_R(M,E')$ is a $u$-$S$-epimorphism, $s\text{Hom}_R(M,E')\subseteq \text{Im}(f^*)$ for some $s\in S$. So $sg=hf$ for some $R$-homomorphism $h:E\to E'$. Let $x\in \text{Ker}(f)$. Then $f(x)=0$ and so $g(sx)=sg(x)=hf(x)=0$. Since $g$ is a monomorphism, we have $sx=0$. Hence $s \text{Ker}(f)=0$. That is, $f$ is a $u$-$S$-monomorphism. 

Conversely, suppose that $f:M\to E$ is a $u$-$S$-monomorphism and $E$ is $u$-$S$-injective. Let $E'$ be any $u$-$S$-injective module. Then there exists $s'\in S$ such that for any
 $R$-homomorphism $h :M\to E'$, there exists an $R$-homomorphism $g:E\to E'$ such that $s'h=gf$. This means that the map $$f^* : \text{Hom}_R(E,E') \to \text{Hom}_R(M,E')$$ is a $u$-$S$-epimorphism. Thus $f:M\to E$ is a $u$-$S$-injective $u$-$S$-preenvelope of $M$.\\[0.1cm]
 (2) Let $M$ be any $R$-module. Then there is a monomorphism $i:M\to E$ with $E$ injective. Since every monomorphism is a $u$-$S$-monomorphism and every injective is $u$-$S$-injective, then $i:M\to E$ is a $u$-$S$-monomorphism with $E$ $u$-$S$-injective. Therefore, by part (1), $i:M\to E$ is a $u$-$S$-injective $u$-$S$-preenvelope of $M$.  
\end{proof} 

The following main result characterizes $u$-$S$-injective $u$-$S$-envelopes in terms of $u$-$S$-essential submodules.

\begin{thm}\label{thm4}
    Let $S$ be a multiplicative subset of a ring $R$ and $M$ an $R$-module. Then the following statements are equivalent:
    \begin{enumerate}
        \item[(1)] $f:M\to E$ is a $u$-$S$-injective $u$-$S$-envelope;
        \item[(2)] $f:M\to E$ is a $u$-$S$-monomorphism with $E$ $u$-$S$-injective and $\text{Im}(f)\unlhd^{u-S} E$.
    \end{enumerate} 
\end{thm}

\begin{proof}
  $(1)\Rightarrow (2)$: Let $f: M\to E$ be a $u$-$S$-injective $u$-$S$-envelope. Then by Proposition \ref{prop10}(1), $f:M\to E$ is a $u$-$S$-monomorphism and $E$ is $u$-$S$-injective. Now, we show that $\text{Im}(f)\unlhd^{u-S} E$. Let $L\leq E$. Suppose that $L\cap \text{Im}(f)$ is $u$-$S$-torsion. Consider the sequence $M\xrightarrow{f} E\xrightarrow{\eta_L} \frac{E}{L}$. Take $x\in \text{Ker}(\eta_Lf)$. Then $f(x)+L=\eta_Lf(x)=0+L$. So $f(x)\in L$. Consequently, $f(\text{Ker}(\eta_Lf))\subseteq L\cap \text{Im}(f)$. But $L\cap \text{Im}(f)$ is $u$-$S$-torsion implies $f(\text{Ker}(\eta_Lf))$ is $u$-$S$-torsion by Lemma \ref{lem2}. Since $f$ is a $u$-$S$-monomorphism, $\text{Ker}(\eta_Lf)$ is $u$-$S$-torsion by Lemma \ref{lem3}(2). Hence $\eta_Lf:M\to \frac{E}{L}$ is a $u$-$S$-monomorphism. Since $E$ is $u$-$S$-injective, there is an $R$-homomorphism $g:\frac{E}{L}\to E$ such that the following diagram 
   \[\xymatrix{
& E &\\
 &M\ar[u]^{sf} \ar[r]_{\eta_Lf}&\frac{E}{L}\ar@{->}[lu]_{g}
}\] 
commutes for some $s\in S$. So $sf=g\eta_Lf$. Since $f$ is a $u$-$S$-injective $u$-$S$-envelope, $g\eta_L$ is a $u$-$S$-isomorphism. So $\text{Ker}(g\eta_L)$ is $u$-$S$-torsion. But $L=\text{Ker}(\eta_L)\subseteq \text{Ker}(g\eta_L)$. Hence, again by Lemma \ref{lem2}, $L$ is $u$-$S$-torsion. Thus $\text{Im}(f)\unlhd^{u-S} E$. \\[0.1cm]
$(2)\Rightarrow (1)$: Let $f: M\to E$ be a $u$-$S$-monomorphism with $E$ $u$-$S$-injective and $\text{Im}(f)\unlhd^{u-S} E$. By Proposition \ref{prop10}(1), $f$ is a $u$-$S$-injective $u$-$S$-preenvelope. 

Next, let $\alpha\in \text{End}_R(E)$. Suppose that $sf=\alpha f$ for some $s\in S$. Let $y=f(x)\in \text{Ker}(\alpha)\cap \text{Im}(f)$. Then $sy=sf(x)=\alpha f(x)=0$. So $s\big(\text{Ker}(\alpha)\cap \text{Im}(f)\big)=0$. This means that $\text{Ker}(\alpha)\cap \text{Im}(f)$ is $u$-$S$-torsion. But $\text{Im}(f)\unlhd^{u-S} E$, so $\text{Ker}(\alpha)$ is $u$-$S$-torsion. Hence $\alpha$ is a $u$-$S$-monomorphism. Since $E$ is $u$-$S$-injective, then by  \cite[Corollary 2.7(1)]{ZQ}, the $u$-$S$-exact sequence 
\begin{center}
    $0\to E\xrightarrow{\alpha}E\to \text{Coker}(\alpha)\to 0$
\end{center} is $u$-$S$-split. So there is an $R$-homomorphism $\beta:E\to E$ such that $\beta \alpha=t1_E$ for some $t\in S$. Since $s\beta f=\beta sf=\beta \alpha f=tf$, then $t\big(\text{Ker}(\beta)\cap \text{Im}(f)\big)=0$. Again, since $\text{Im}(f)\unlhd^{u-S} E$, we have $t'\text{Ker}(\beta)=0$ for some $t'\in S$. Let $e\in E$. Then $t\beta(e)=\beta\alpha(\beta(e))$. So $te-\alpha(\beta(e))\in \text{Ker}(\beta)$. It follows that
\begin{center}
    $t'te=t'\alpha(\beta(e))=\alpha(t'\beta(e))\in \text{Im}(\alpha)$ 
\end{center} 
Hence $t'tE\subseteq \text{Im}(\alpha)$. Thus $\alpha$ is a $u$-$S$-epimorphism. Therefore, $\alpha$ is a $u$-$S$-isomorphism.
\end{proof}

The following example gives a $u$-$S$-injective $u$-$S$-envelope of a module $M$ that is not an injective envelope of $M$.

\begin{example}
  Let $R=\mathbb{Z}$, $S=\mathbb{Z}\setminus\{0\}$, and $E=\mathbb{Z}_{15}$. Then by Proposition \ref{prop1}(2) and since $E$ is a $u$-$S$-torsion $R$-module, we have $M=3\mathbb{Z}_{15}$ is a $u$-$S$-essential submodule of $E$ and so the inclusion map $i_{M}: M \to E$ is a $u$-$S$-essential $u$-$S$-monomorphism. Since $E$ is a $u$-$S$-torsion $R$-module, $E$ is $u$-$S$-injective by \cite[Corollary 4.4]{QK}. Thus by Theorem \ref{thm4}, $i_{M}: M \to E$ is a $u$-$S$-injective $u$-$S$-envelope of $M$. However, $i_{M}: M \to E$ is not an injective envelope of $M$ since $E$ is not an injective $R$-module.   
\end{example}

The following theorem shows that a finite direct sum of $u$-$S$-injective $u$-$S$-envelopes is a $u$-$S$-injective $u$-$S$-envelope.

\begin{thm}\label{thm5}
Let $S$ be a multiplicative subset of a ring $R$. Suppose that $f_i:M_i\to E_i$ is a $u$-$S$-injective $u$-$S$-envelope for each $i=1,2,\cdots,n$. Then $\bigoplus\limits_{i=1}^{n}f_i:\bigoplus\limits_{i=1}^{n}M_i\to \bigoplus\limits_{i=1}^{n}E_i$ is a $u$-$S$-injective $u$-$S$-envelope.   
\end{thm}

\begin{proof}
    Let $f:=\bigoplus\limits_{i=1}^{n}f_i$. Then $\text{Ker}(f)=\bigoplus\limits_{i=1}^{n}\text{Ker} (f_i)$. Since for each $i=1,2,\cdots,n$, $\text{Ker} (f_{i})$ is $u$-$S$-torsion, then for each $i=1,2,\cdots,n$, there exists $s_i\in S$ such that $s_i\text{Ker} (f_{i})=0$. Let $s:=s_1s_2\cdots s_n$. Then $s\text{Ker} (f)=0$. So $\text{Ker} (f)$ is $u$-$S$-torsion. That is, $f$ is a $u$-$S$-monomorphism. Also, since $E_i$ is $u$-$S$-injective for each $i=1,2,\cdots,n$, then $\bigoplus\limits_{i=1}^{n}E_i$ is $u$-$S$-injective by \cite[Proposition 4.7(1)]{QK}. Next, by hypothesis and Theorem \ref{thm4}, we have $\text{Im}(f_i)\unlhd^{u-S} E_i$ for each $i=1,2,\cdots,n$. So by Corollary \ref{cor1}, $\text{Im}(f)=\bigoplus\limits_{i=1}^{n}\text{Im}(f_i)\unlhd^{u-S} \bigoplus\limits_{i=1}^{n}E_i$. Again by Theorem \ref{thm4}, $f$ is a $u$-$S$-injective $u$-$S$-envelope.   
\end{proof}

 An arbitrary direct sum of $u$-$S$-injective $u$-$S$-envelopes need not be a $u$-$S$-injective $u$-$S$-envelope, as the following example shows. 

\begin{example}\label{exp7}
 Let $R= \mathbb{Z}$, $p$ a prime number, and $S = \{p^n \mid n=0,1,2,\cdots\}$. For each $n \geq 1$, let $M_n = \frac{\mathbb{Z}}{p^n\mathbb{Z}}$, $K_n=\frac{p\mathbb{Z}}{p^n\mathbb{Z}}$, and $f_n :M_n\to \frac{M_n}{K_n}$ be the natural map. Since $\text{Ker}(f_n)=K_n$ is $u$-$S$-torsion for each $n\geq 1$, each $f_n$ is a $u$-$S$-monomorphism. Also, since $p\in S$ and $p\big(\frac{\mathbb{Z}}{p\mathbb{Z}}\big)=0$, then for each $n \geq 1$, $\frac{M_n}{K_n}\cong \frac{\mathbb{Z}}{p\mathbb{Z}}$ is $u$-$S$-torsion, and so for each $n \geq 1$, $\frac{M_n}{K_n}$ is $u$-$S$-injective by \cite[Corollary 4.4]{QK}. Thus, by Proposition \ref{prop10}(1), each $f_n$ is a $u$-$S$-injective $u$-$S$-preenvelope. Moreover, we have $\text{Im}(f_n)=\frac{M_n}{K_n}\unlhd^{u-S}\frac{M_n}{K_n}$ for each $n\geq 1$. Thus, each $f_n$ is a $u$-$S$-injective $u$-$S$-envelope by Theorem \ref{thm4}. However, the map $f:=\bigoplus\limits_{n=1}^{\infty}f_n:\bigoplus\limits_{n=1}^{\infty}M_n\to \bigoplus\limits_{n=1}^{\infty} \frac{M_n}{K_n}$ is not a $u$-$S$-injective $u$-$S$-envelope since $\text{Ker}(f)=\bigoplus\limits_{n=1}^{\infty} \text{Ker}(f_n)=\bigoplus\limits_{n=1}^{\infty} K_n$ is not $u$-$S$-torsion, that is, $f$ is not a $u$-$S$-monomorphism.
\end{example}

 Let $S$ be a multiplicative subset of a ring $R$. Recall that $R$ is called $u$-$S$-Noetherian if there is an element $s\in S$ such that for any ideal $I$ of $R$, $sI\subseteq J$ for some finitely generated sub-ideal $J$ of $I$, and $S$ is called regular if $S\subseteq \text{reg}(R)$ \cite{QK}. The next result proves that any direct sum of injective envelopes is a $u$-$S$-injective $u$-$S$-preenvelope, provided that $R$ is $u$-$S$-Noetherian and $S$ is regular.
\begin{thm}\label{thm6}
    Let $R$ be a $u$-$S$-Noetherian ring and $S$ a regular multiplicative subset of $R$. Let $i_\alpha:M_\alpha\to E(M_\alpha)$, $\alpha\in \Delta$, be the injective envelopes. Then $\bigoplus\limits_{\alpha\in \Delta}i_\alpha:\bigoplus\limits_{\alpha\in \Delta} M_\alpha\to \bigoplus\limits_{\alpha\in \Delta}E(M_\alpha)$ is a $u$-$S$-injective $u$-$S$-preenvelope. 
\end{thm}

\begin{proof}
  Since each $i_\alpha$ is a monomorphism, $\bigoplus\limits_{\alpha\in \Delta}i_{\alpha}$ is a monomorphism \cite{AF}, and hence $\bigoplus\limits_{\alpha\in \Delta}i_{\alpha}$ is a $u$-$S$-monomorphism. Since $R$ is $u$-$S$-Noetherian and $E(M_\alpha)$ is injective for each $\alpha\in \Delta$, then by \cite[Theorem 4.10]{QK}, $\bigoplus\limits_{\alpha\in \Delta}E(M_\alpha)$ is $u$-$S$-injective. Thus, by Proposition \ref{prop10}(1), $\bigoplus\limits_{\alpha\in \Delta}i_\alpha$ is a $u$-$S$-injective $u$-$S$-preenvelope. 
\end{proof}

\begin{proposition}\label{prop11}
Let $S$ be a multiplicative subset of a ring $R$. Suppose that $f:M\to E$ is a $u$-$S$-injective $u$-$S$-envelope. Then the following statements hold.
\begin{enumerate}
    \item[(1)] $M$ is $u$-$S$-injective if and only if $M$ is $u$-$S$-isomorphic to $E$.
    \item[(2)] If $N\unlhd^{u-S}M$ and $g:N\to E'$ is a $u$-$S$-injective $u$-$S$-envelope of $N$, then $E$ is $u$-$S$-isomorphic to $E'$.
\end{enumerate}
\end{proposition}

\begin{proof} (1) Suppose that $M$ is $u$-$S$-injective. Since the identity map $1_M:M\to M$, and $f:M\to E$ are $u$-$S$-injective $u$-$S$-envelopes of $M$, then by Proposition \ref{prop8}, $M$ is $u$-$S$-isomorphic to $E$. The converse follows from \cite[Proposition 4.7 (3)]{QK} and the fact that $E$ is $u$-$S$-injective. \\[0.1cm]
(2) Since $N\unlhd^{u-S} M$, $i_{N}: N \to M$ is a $u$-$S$-essential monomorphism by Proposition \ref{prop7}. But $f:M\to E$ is a $u$-$S$-essential $u$-$S$-monomorphism by Theorem \ref{thm4}. Then $fi_N:N\to E$ is a $u$-$S$-essential $u$-$S$-monomorphism. Indeed, $fi_N$ is a $u$-$S$-monomorphism by the proof of \cite[Proposition 3.3]{ZQ}. Also, since $N\unlhd^{u-S} M$ and $f$ is a $u$-$S$-monomorphism, then $f(N)\unlhd^{u-S}f(M)$ by Proposition \ref{prop6}(2), but $f(M)\unlhd^{u-S} E$, so $\text{Im}(fi_N)=f(N)\unlhd^{u-S} E$ by Proposition \ref{prop5}(1). That is, $fi_N$ is $u$-$S$-essential. So we have $fi_N:N\to E$ is a $u$-$S$-essential $u$-$S$-monomorphism with $E$ $u$-$S$-injective. Hence $fi_N:N\to E$ is a $u$-$S$-injective $u$-$S$-envelope of $N$ by Theorem \ref{thm4}. But $g:N\to E'$ is a $u$-$S$-injective $u$-$S$-envelope of $N$, so $E$ is $u$-$S$-isomorphic to $E'$ by Proposition \ref{prop8}. 
\end{proof}

The following two lemmas are needed in the proof of Proposition \ref{prop12}.

\begin{lemma}\label{lem4} Let $S$ be a multiplicative subset of a ring $R$. A $u$-$S$-monomorphism $f:A\to B$ is $u$-$S$-essential if and only if for any $R$-homomorphism $h$, if $hf$ is a $u$-$S$-monomorphism, then $h$ is a $u$-$S$-monomorphism.
    \end{lemma}
\begin{proof}
    Let $f:A\to B$ be a $u$-$S$-monomorphism and $K=\text{Im}(f)$. Then $f':A\to K$ defined by $f'(x)=f(x)$, $x\in A$, is a $u$-$S$-isomorphism. We have $f=i_{K}f'$, where $i_{K}:K\to B$ is the inclusion map. By \cite[Lemma 2.1]{ZQ}, there is a $u$-$S$-isomorphism $g:K\to A$ such that $f'g=s1_{K}$ and $gf'=s1_A$ for some $s\in S$. Now, let $h$ be any $R$-homomorphism. Since 
    \begin{center}
        $hfg=hi_{K}f'g=hi_{K}s1_K=shi_{K}$~ and ~$shf=hfs1_A=hfgf'=shi_Kf'$, 
    \end{center}
    we have $g(\text{Ker}(hi_K))\subseteq \text{Ker}(hf)$ and $sf'(\text{Ker}(hf))\subseteq \text{Ker}(hi_K)$. Since $g$ and $sf'=s1_{K}f'$ are $u$-$S$-monomorphisms, then by Lemmas \ref{lem2} and \ref{lem3}(2), we have $\text{Ker}(hf)$ is $u$-$S$-torsion if and only if $\text{Ker}(hi_K)$ is $u$-$S$-torsion. That is, $hf$ is a $u$-$S$-monomorphism if and only if $hi_K$ is a $u$-$S$-monomorphism. By Proposition \ref{prop7}, the proof is complete.
\end{proof}

\begin{lemma}\label{lem5}
 Let $S$ be a multiplicative subset of a ring $R$. Suppose that the following diagram 
  \[
\xymatrix{
& A \ar[dl]_f \ar[dr]^g & \\
B  \ar[rr]_\varphi & & C
}
\]
is commutative with $f$ and $g$ are $u$-$S$-monomorphisms and $\varphi$ is a $u$-$S$-isomorphism. Then $f$ is $u$-$S$-essential if and only if $g$ is $u$-$S$-essential.
\end{lemma}

\begin{proof} First, since $\varphi$ is a $u$-$S$-isomorphism, there exist a $u$-$S$-isomorphism $\psi:C\to B$ and $t\in S$ such that $\psi \varphi =t1_{B}$ and $\varphi\psi =t1_{C}$ by \cite[Lemma 2.1]{ZQ}. Now, let $f$ be $u$-$S$-essential. Suppose that $hg$ is a $u$-$S$-monomorphism. Then $h\varphi f=hg$ is a $u$-$S$-monomorphism. So by Lemma \ref{lem4} and since $f$ is $u$-$S$-essential, we have $h\varphi$ is a $u$-$S$-monomorphism. Hence $th=ht1_C=(h\varphi)\psi$ is a $u$-$S$-monomorphism, being a composition of two $u$-$S$-monomorphisms. Since $\text{Ker}(h)\subseteq \text{Ker}(th)$, then $\text{Ker}(h)$ is $u$-$S$-torsion by Lemma \ref{lem2}. Thus $h$ is a $u$-$S$-monomorphism. Again by Lemma \ref{lem4}, $g$ is $u$-$S$-essential. The proof of the converse is similar.
\end{proof}

The following proposition gives a characterization of a $u$-$S$-injective $u$-$S$-envelope, when it exists.

\begin{proposition}\label{prop12}
      Let $S$ be a multiplicative subset of a ring $R$ and $M$ an $R$-module. Suppose that $M$ has a $u$-$S$-injective $u$-$S$-envelope. Then the following statements about a $u$-$S$-monomorphism $i:M\to E$ are equivalent:
     \begin{enumerate}
         \item[(1)] $i:M\to E$ is a $u$-$S$-injective $u$-$S$-envelope;
       
         \item[(2)] $E$ is $u$-$S$-injective and for any $u$-$S$-monomorphism $f:M\to Q$ with $Q$ $u$-$S$-injective, there is a $u$-$S$-monomorphism $g:E\to Q$ such that the following diagram
            \[\xymatrix{
& Q &\\
&M\ar[u]^{sf} \ar[r]^{i}&E\ar@{->}[lu]_{g} 
}\] commutes for some $s\in S$;
         \item[(3)] $i$ is a $u$-$S$-essential $u$-$S$-monomorphism and for any $u$-$S$-essential $u$-$S$-monomorphism $f:M\to N$, there is a $u$-$S$-monomorphism $g:N\to E$ such that the following diagram
            \[\xymatrix{
& E &\\
&M\ar[u]^{si} \ar[r]^{f}&N\ar@{->}[lu]_{g}
}\] commutes for some $s\in S$.
     \end{enumerate}
\end{proposition}

\begin{proof}
$(1)\Rightarrow (2)$: By $(1)$, $E$ is $u$-$S$-injective. Let $f:M\to Q$ be a $u$-$S$-monomorphism with $Q$ $u$-$S$-injective. Now, since $Q$ is $u$-$S$-injective, then there is an $R$-homomorphism $g:E\to Q$ such that $sf=gi$ for some $s\in S$. Since $gi=sf$ is a $u$-$S$-monomorphism and $i$ is $u$-$S$-essential, then by Lemma \ref{lem4}, $g$ is a $u$-$S$-monomorphism.\\
$(1)\Rightarrow (3)$: By $(1)$ and Theorem \ref{thm4}, $i$ is a $u$-$S$-essential $u$-$S$-monomorphism. Let $f:M\to N$ be a $u$-$S$-essential $u$-$S$-monomorphism. Since $E$ is $u$-$S$-injective, there is an $R$-homomorphism $g:N\to E$ such that $si=gf$ for some $s\in S$. Since $gf=si$ is a $u$-$S$-monomorphism and $f$ is $u$-$S$-essential, then by Lemma \ref{lem4}, $g$ is a $u$-$S$-monomorphism.\\
$(2)\Rightarrow (1)$: By (2), $E$ is $u$-$S$-injective. Let $f:M\to Q$ be a $u$-$S$-injective $u$-$S$-envelope of $M$. Then $f:M\to Q$ is a $u$-$S$-monomorphism with $Q$ $u$-$S$-injective. By $(2)$, there is a $u$-$S$-monomorphism $g:E\to Q$ such that $sf=gi$ for some $s\in S$. Note that $sf=(s1_Q)f$, $f$ is $u$-$S$-essential, and $s1_Q:Q\to Q$ is a $u$-$S$-isomorphism, so $sf$ is $u$-$S$-essential by Lemma \ref{lem5}. Now, since $E$ is $u$-$S$-injective, then by  \cite[Corollary 2.7(1)]{ZQ}, the $u$-$S$-exact sequence \begin{center}
    $0\to E\xrightarrow{g} Q\to \text{Coker}(g)\to 0$
\end{center} is $u$-$S$-split. Hence, there exist $t\in S$ and an $R$-homomorphism $g':Q \to E$ such that $g'g=t 1_{E}$. 

Let $y\in Q$. Then $g'(y)\in E$. So $tg'(y)=g'g(g'(y))$. This implies that $ty-g(g'(y))\in \text{Ker}(g')$ and so $ty\in \text{Im}(g)+\text{Ker}(g')$. Hence $tQ\subseteq \text{Im}(g)+\text{Ker}(g')$. Also, $t\big(\text{Im}(g)\cap \text{Ker}(g')\big)=0$ since if $g(x)\in \text{Ker}(g')$ where $x\in E$, then $tx=t 1_{E}(x)=g'g(x)=0$ and so $tg(x)=g(tx)=0$. Since $\text{Im}(sf)\subseteq  \text{Im}(g)$ and $sf$ is $u$-$S$-essential, so by Proposition \ref{prop5}(1), $\text{Im}(g)\unlhd^{u-S} Q$. So $s'\text{Ker}(g')=0$ for some $s'\in S$ and hence $s'tQ\subseteq \text{Im}(g)$. This means that $g$ is a $u$-$S$-epimorphism. Thus $g$ is a $u$-$S$-isomorphism. But $sf=gi$ and $sf$ is $u$-$S$-essential, so $i$ is $u$-$S$-essential by Lemma \ref{lem5}. Therefore, $(1)$ holds. \\
$(3)\Rightarrow (1)$: By (3), $i$ is a $u$-$S$-essential $u$-$S$-monomorphism. It remains to show that $E$ is $u$-$S$-injective. Let $f:M\to N$ be a $u$-$S$-injective $u$-$S$-envelope of $M$. By $(3)$,  there is a $u$-$S$-monomorphism $g:N\to E$ such that $si=gf$. Since $N$ is $u$-$S$-injective, the $u$-$S$-exact sequence \begin{center}
    $0\to N\xrightarrow{g} E\to \text{Coker}(g)\to 0$
\end{center} is $u$-$S$-split by  \cite[Corollary 2.7(1)]{ZQ}. By a similar argument as in the proof of the implication $(2)\Rightarrow (1)$, we obtain $g:N\to E$ is a $u$-$S$-isomorphism. But since $N$ is $u$-$S$-injective, we have $E$ is $u$-$S$-injective by \cite[Proposition 4.7(3)]{QK}.
\end{proof}

Let $R$ be a ring and $S$ a multiplicative subset of $R$. Define $$\mathcal{C}=\{M\mid M~\text{is an}~R\text{-module and}~E(M)~\text{is a prime}~R\text{-module}\}.$$ By Proposition \ref{prop2}, $M\unlhd^{u-S} E(M)$ for each $M\in \mathcal{C}$. Since $E(M)$ is $u$-$S$-injective, the inclusion map $M\hookrightarrow E(M)$ is a $u$-$S$-injective $u$-$S$-envelope of $M$ for each $M\in \mathcal{C}$. Hence, any $R$-module $M$ in $\mathcal{C}$ has a $u$-$S$-injective $u$-$S$-envelope. We end this paper with the following unanswered question:

\begin{question}
Let $R$ be a commutative ring and $S$ a multiplicative subset of $R$. Is it true that any $R$-module has a $u$-$S$-injective $u$-$S$-envelope?
\end{question}

\textbf{Conflict of interest:} The authors declare that they have no conflict of interest.

\end{document}